\documentclass[12pt, 14paper,reqno]{amsart}
\usepackage{amsmath,amsfonts,amssymb}
\usepackage{longtable}

\usepackage[mathscr]{eucal}

\usepackage{amssymb, amsmath, amsthm}
\usepackage[breaklinks]{hyperref}

\usepackage{graphicx}
\usepackage[mathscr]{eucal}
\usepackage{mathrsfs}
\usepackage{amsbsy}
\usepackage{dsfont}
\usepackage{bbm}
\usepackage{wasysym}
\usepackage{stmaryrd}
\usepackage{url}

\input xypic
\xyoption{all}

\makeindex
\makeglossary

\begin{document}
\baselineskip = 16pt

\newcommand \ZZ {{\mathbb Z}}
\newcommand \NN {{\mathbb N}}
\newcommand \RR {{\mathbb R}}
\newcommand \PR {{\mathbb P}}
\newcommand \AF {{\mathbb A}}
\newcommand \GG {{\mathbb G}}
\newcommand \QQ {{\mathbb Q}}
\newcommand \CC {{\mathbb C}}
\newcommand \bcA {{\mathscr A}}
\newcommand \bcC {{\mathscr C}}
\newcommand \bcD {{\mathscr D}}
\newcommand \bcF {{\mathscr F}}
\newcommand \bcG {{\mathscr G}}
\newcommand \bcH {{\mathscr H}}
\newcommand \bcM {{\mathscr M}}
\newcommand \bcJ {{\mathscr J}}
\newcommand \bcL {{\mathscr L}}
\newcommand \bcO {{\mathscr O}}
\newcommand \bcP {{\mathscr P}}
\newcommand \bcQ {{\mathscr Q}}
\newcommand \bcR {{\mathscr R}}
\newcommand \bcS {{\mathscr S}}
\newcommand \bcV {{\mathscr V}}
\newcommand \bcW {{\mathscr W}}
\newcommand \bcX {{\mathscr X}}
\newcommand \bcY {{\mathscr Y}}
\newcommand \bcZ {{\mathscr Z}}
\newcommand \goa {{\mathfrak a}}
\newcommand \gob {{\mathfrak b}}
\newcommand \goc {{\mathfrak c}}
\newcommand \gom {{\mathfrak m}}
\newcommand \gon {{\mathfrak n}}
\newcommand \gop {{\mathfrak p}}
\newcommand \goq {{\mathfrak q}}
\newcommand \goQ {{\mathfrak Q}}
\newcommand \goP {{\mathfrak P}}
\newcommand \goM {{\mathfrak M}}
\newcommand \goN {{\mathfrak N}}
\newcommand \uno {{\mathbbm 1}}
\newcommand \Le {{\mathbbm L}}
\newcommand \Spec {{\rm {Spec}}}
\newcommand \Gr {{\rm {Gr}}}
\newcommand \Pic {{\rm {Pic}}}
\newcommand \Jac {{{J}}}
\newcommand \Alb {{\rm {Alb}}}
\newcommand \Corr {{Corr}}
\newcommand \Chow {{\mathscr C}}
\newcommand \Sym {{\rm {Sym}}}
\newcommand \Prym {{\rm {Prym}}}
\newcommand \cha {{\rm {char}}}
\newcommand \eff {{\rm {eff}}}
\newcommand \tr {{\rm {tr}}}
\newcommand \Tr {{\rm {Tr}}}
\newcommand \pr {{\rm {pr}}}
\newcommand \ev {{\it {ev}}}
\newcommand \cl {{\rm {cl}}}
\newcommand \interior {{\rm {Int}}}
\newcommand \sep {{\rm {sep}}}
\newcommand \td {{\rm {tdeg}}}
\newcommand \alg {{\rm {alg}}}
\newcommand \im {{\rm im}}
\newcommand \gr {{\rm {gr}}}
\newcommand \op {{\rm op}}
\newcommand \Hom {{\rm Hom}}
\newcommand \Hilb {{\rm Hilb}}
\newcommand \Sch {{\mathscr S\! }{\it ch}}
\newcommand \cHilb {{\mathscr H\! }{\it ilb}}
\newcommand \cHom {{\mathscr H\! }{\it om}}
\newcommand \colim {{{\rm colim}\, }} 
\newcommand \End {{\rm {End}}}
\newcommand \coker {{\rm {coker}}}
\newcommand \id {{\rm {id}}}
\newcommand \van {{\rm {van}}}
\newcommand \spc {{\rm {sp}}}
\newcommand \Ob {{\rm Ob}}
\newcommand \Aut {{\rm Aut}}
\newcommand \cor {{\rm {cor}}}
\newcommand \Cor {{\it {Corr}}}
\newcommand \res {{\rm {res}}}
\newcommand \red {{\rm{red}}}
\newcommand \Gal {{\rm {Gal}}}
\newcommand \PGL {{\rm {PGL}}}
\newcommand \Bl {{\rm {Bl}}}
\newcommand \Sing {{\rm {Sing}}}
\newcommand \spn {{\rm {span}}}
\newcommand \Nm {{\rm {Nm}}}
\newcommand \inv {{\rm {inv}}}
\newcommand \codim {{\rm {codim}}}
\newcommand \Div{{\rm{Div}}}
\newcommand \CH{{\rm{CH}}}
\newcommand \sg {{\Sigma }}
\newcommand \DM {{\sf DM}}
\newcommand \Gm {{{\mathbb G}_{\rm m}}}
\newcommand \tame {\rm {tame }}
\newcommand \znak {{\natural }}
\newcommand \lra {\longrightarrow}
\newcommand \hra {\hookrightarrow}
\newcommand \rra {\rightrightarrows}
\newcommand \ord {{\rm {ord}}}
\newcommand \Rat {{\mathscr Rat}}
\newcommand \rd {{\rm {red}}}
\newcommand \bSpec {{\bf {Spec}}}
\newcommand \Proj {{\rm {Proj}}}
\newcommand \pdiv {{\rm {div}}}
\newcommand \wt {\widetilde }
\newcommand \ac {\acute }
\newcommand \ch {\check }
\newcommand \ol {\overline }
\newcommand \Th {\Theta}
\newcommand \cAb {{\mathscr A\! }{\it b}}

\newenvironment{pf}{\par\noindent{\em Proof}.}{\hfill\framebox(6,6)
\par\medskip}

\newtheorem{theorem}[subsection]{Theorem}
\newtheorem{conjecture}[subsection]{Conjecture}
\newtheorem{proposition}[subsection]{Proposition}
\newtheorem{lemma}[subsection]{Lemma}
\newtheorem{remark}[subsection]{Remark}
\newtheorem{remarks}[subsection]{Remarks}
\newtheorem{definition}[subsection]{Definition}
\newtheorem{corollary}[subsection]{Corollary}
\newtheorem{example}[subsection]{Example}
\newtheorem{examples}[subsection]{examples}

\title{Branched covers of $\PR^1$ and divisibility in class group}
\author{Kalyan Banerjee, Kalyan Chakraborty and Azizul Hoque}

\address{K. Banerjee @Department of Mathematics, SRM University AP, Mangalagiri-Mandal, Amaravati-522502, Andhra Pradesh, India.}
\email{kalyan.ba@srmap.edu.in}
\address{K. Chakraborty @Department of Mathematics, SRM University AP, Mangalagiri-Mandal, Amaravati-522502, Andhra Pradesh, India.}
\email{kalyan.c@srmap.edu.in}
\address{Azizul Hoque @Department of Mathematics,  Gauhati University, Guwahati-781014, Assam, India.}
\email{azizul@gauhati.ac.in, ahoque.ms@gmail.com}

\keywords{Picard Groups, class groups, super-elliptic curve, cyclotomic field}
\subjclass[2020] {14C25, 11D09, 11R29}
\date{\today}

\maketitle

\begin{abstract}
We start with $n$-torsions in the Jacobian of an $m$-gonal curve and produce $n$-torsions in the class group of certain number field $K$.
\end{abstract}

\section{Introduction}
One of the classical problem in algebraic number theory is to understand the structure of the class group of a given number field. It is known that the class group is finite for all number fields. The question is how to find an element of say order $n$ in the class group of the given number field. One of the approaches is to use algebro-geometric methods to find out such elements. This idea of using algebraic geometry to find elements of large order was first introduced in  \cite{AP} by Agboola and Pappus. Later,   Gillibert and Levin \cite{GL}  approached the problem by pulling back torsion line bundles to the class group. In \cite{GI}, Gillibert has shown how to pull back torsion elements on a hyperelliptic curve to the class group of quadratic number fields.

Our approach to this problem is to start with a branched cover of $\PR^1$ over $\QQ$ and spread it over $\Spec(\ZZ)$. Then we get a fibration say $\bcC\to \PR^1_{\ZZ}$, where $\bcC$ is a fixed spread of $C$ over $\Spec(\ZZ)$. Suppose we take a torsion element in  the Jacobian of $C$ defined over $\QQ$, and  spread this torsion divisor on $\bcC$ and consider restriction of this divisor on the fiber at a point $P\in \ZZ$, such that the fiber is smooth (that is we consider the good primes). Let us call it $\bcC_P$. The spectrum of the ring of integers $\bcO $  in the fiber $C_P$ corresponds to an affine open subset of $\bcC$.  We argue that this element in the corresponding class group of $\bcO$ is a nontrivial torsion element.

To accomplish this, we start with the theory of Chow schemes and Hilbert schemes for arithmetic varieties which parametrize cycles on an arithmetic variety, and then use \'Etale monodromy of a fibration to conclude that the torsion element mentioned above vary in a family. Precisely, we prove the following result. 

\begin{theorem}\label{thm1}
Let the Tate module $T_l(\bcC_{\bar\eta})$ be non-trivial, where $\bar\eta$ is the spectrum of a separably closed field (e.g. in case of $\bar{\QQ}$ the assumption implies $H^1(C_{\bar\QQ}, \QQ_l)$ is $\neq 0$). Then there exist infinitely number fields  $L$ whose class group  contain $l^i$ torsions for some $i$ and for some prime $l$.
\end{theorem}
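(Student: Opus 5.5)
Since $T_l(\bcC_{\bar\eta})$ is non-trivial, the group $J(\bar\QQ)[l^i]$ is non-trivial for every $i\geq 1$, where $J$ is the Jacobian of the generic fiber $C$ of $\bcC\to\Spec(\ZZ)$. Fix $i$ and a non-zero class $\alpha\in J[l^i](\bar\QQ)$. It is defined over some number field $F$ that is finite over $\QQ$. We represent it by a divisor $D$ of degree zero on $C_F$ with $l^iD=\mathrm{div}(f)$ and $D$ not principal. Let $\bcD$ be the Zariski closure of $D$ in $\bcC_{\bcO_F}$; it is a relative divisor, and $l^i\bcD-\mathrm{div}(f)$ is supported on vertical fibres over finitely many bad primes. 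Using the relative Picard/Hilbert scheme of $\bcC\to\PR^1_\ZZ$, we view $\bcD$ as a section of the relative Jacobian over an open set $U\subset\Spec\bcO_F$. There the fibres are smooth and $l$ is invertible, so $J[l^i]$ is a finite étale group scheme over $U$.

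To produce number fields, we use the structure of $\bcC$ as a cover of $\PR^1_\ZZ$, fibred over rational points $t\in\PR^1(\QQ)$ (and their integral models). For each such $t$, the fibre $\bcC_t$ is the spectrum of an order in a number field $L_t=\QQ(C_t)$. On the affine part $\Spec\bcO_{L_t}$ we restrict $\bcD$ to get a fractional ideal $\goa_t$. The identity $l^i\bcD=\mathrm{div}(f)$ plus vertical terms gives $\goa_t^{l^i}=(f(t))\cdot(\text{ideals above bad primes})$. After enlarging the bad set $S$ and passing to the class group of the $S$-integers, or after checking that the vertical part is trivial, $[\goa_t]$ is an $l^i$-torsion class. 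Taking infinitely many $t$ with pairwise distinct $L_t$ (Hilbert irreducibility keeps the fibres irreducible and the fields distinct) yields infinitely many candidate fields.

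The main obstacle is showing that $[\goa_t]$ is non-trivial for infinitely many $t$. The plan is the étale-monodromy argument. Consider the specialization map $J[l^i]\to$ (torsion of class groups along fibres) as a family over the base of the fibration. The locus where the specialized class becomes trivial, i.e.\ where $\goa_t$ is principal, is cut out by the condition that $D_t$ lies in the image of the map from the Hilbert scheme of effective divisors (functions $g$ with $\mathrm{div}(g)=D_t$). Via the Chow/Hilbert schemes of the earlier sections, this is a constructible, countable union of closed subsets of the parameter space. If it contained a dense set of $t$, then by constructibility it would contain the generic point. Then $D$ would be principal on $C_{\bar\eta}$, contradicting the non-triviality of $\alpha$ in $T_l$; here we use that $\pi_1$ acts continuously on $T_l$, so the class stays non-zero on an open set. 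Hence outside a thin set the class $[\goa_t]$ is non-zero of order dividing $l^i$, and thus the class group of $\bcO_{L_t}$ has non-trivial $l^{i'}$-torsion for some $i'\le i$.

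I expect the hardest points to be two. The first is comparing principality on the geometric fibre with principality in the arithmetic class group, since units and the field of definition interfere. The second is controlling the vertical components. I would first try to handle both by restricting to $t$ avoiding a finite set of primes and using the injectivity of the reduction on prime-to-$p$ torsion.
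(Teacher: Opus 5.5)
Your outline (spread out an $l^i$-torsion divisor, restrict it to the horizontal curves $\Spec\bcO_{L_t}$ over points $t$ of the base $\PR^1$, and use monodromy to propagate non-vanishing) is essentially the paper's route. The gap is exactly at the step you flag: the claim that $[\goa_t]\neq 0$ for $t$ outside a thin set. That claim is false in general.

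Take $C\colon y^2=x^4+4$ (genus $1$, $m=2$ via $x$) and $\phi=y-x^2$. Since $\phi\,(y+x^2)=4$, one gets $\mathrm{div}(\phi)=2\infty_+-2\infty_-$. So $D=\infty_+-\infty_-$ is a non-zero $2$-torsion class; a function with divisor $D$ would force $C\cong\PR^1$.

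For $t\in\ZZ\setminus\{0\}$ the field $L_t=\QQ(\sqrt{t^4+4})$ is real quadratic. The horizontal curve over the section $x=t$ lies entirely over $\AF^1_\ZZ$, so it never meets the closure $\bcD$ of $\infty_\pm$. Hence $\goa_t=(1)$. The triviality locus therefore contains $\ZZ\setminus\{0\}$, which is Zariski dense and not thin (a thin subset of $\QQ$ contains only $O(N^{1/2})$ integers of absolute value at most $N$), although $D\neq 0$.

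So the inference ``dense $\Rightarrow$ contains the generic point $\Rightarrow$ $D$ principal'' is invalid, for three reasons:
\begin{enumerate}
\item Over $\QQ$ every set of rational points is a countable union of closed points, so ``countable union of closed subsets'' carries no information.
\item Nothing makes the principality locus constructible.
\item The generic point of the $t$-line has fibre $\Spec\QQ(C)$, not the curve $C_{\bar\eta}$, so principality of $D$ on $C_{\bar\QQ}$ cannot be read off there. This conflates $\bcC\to\Spec\ZZ$ with the finite map $\bcC\to\PR^1_\ZZ$.
\end{enumerate}

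The obstruction is arithmetic and invisible to monodromy. Working in the $S$-class group, as you propose, $\goa_t^{l^i}=(f(P_t))$, and $[\goa_t]=0$ if and only if $f(P_t)\in\bcO_{L_t,S}^{*}\,L_t^{*l^i}$. Hilbert irreducibility for the cover $z^{l^i}=f$ only gives $f(P_t)\notin L_t^{*l^i}$ off a thin set. It cannot rule out that this Kummer class comes from an $S$-unit, and the unit group of $L_t$ does not vary algebraically with $t$. In the example, $f=\phi$ and $\phi(P_t)=\sqrt{t^4+4}-t^2$ divides $4$ for every $t\in\ZZ$.

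The injectivity of reduction on prime-to-$p$ torsion that you mention concerns the vertical fibres $\bcC_{\mathbb F_p}$ and does not touch this. What is missing is control of units. This is why Kummer-theoretic results such as Gillibert--Levin's bound the $l$-rank of $\mathrm{Cl}(L_t)$ from below only by the number of independent torsion classes minus a unit-rank term, or work with fields having small unit groups.

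Following the paper will not close the gap either. Its step ``if the restriction to $\CH^1(V_P)$ were zero, its pullback to $\CH^1(\bcC_{\bar\eta'})$ would be zero'' is contradicted by the same example.

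Finally, the conclusion as literally stated holds for elementary reasons: by genus theory, $\QQ(\sqrt{-pq})$ has even class number for distinct odd primes $p,q$. So the real content of your argument is the specialization claim, and that is precisely what remains unestablished.
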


\section{Proof of Theorem \ref{thm1}}
Let $C$ be a smooth projective curve defined over $\QQ$ with a $m:1$ regular map to $\PR^1_{\QQ}$. This map is branched and there exists no non-trivial \'etale cover of $\PR^1_{\QQ}$. We consider an integral model of $C$ and $\PR^1_{\QQ}$ over $\Spec(\ZZ)$. Then we have a morphism of $\Spec(\ZZ)$-schemes $\bcC\to \PR^1_{\ZZ}$, where $\bcC$ is the chosen integral model of $C$ over $\Spec(\ZZ)$. Then for a general  scheme-theoretic point $ {P}$ on $\PR^1_{\ZZ}$,  the normalization of the fiber over it, is the ring of integers of the field $f^{-1}(P)$, where $f$ is from $C\to \PR^1_{\QQ}$ and $P$ is considered as a general scheme theoretic point on $\PR^1_{\QQ}$. Let us consider a simple example. Suppose that $C$ is a smooth projective curve defined over $\QQ$ embedded into $\PR^2$, whose equation is given by
$$y^m=f(x)=a_0x^n+\cdots+a_n $$
where all $a_i$'s are rational numbers. Then we write
$$a_i=m_i/n_i$$
and put in the above equation. Then it becomes
$$y^m=\sum_i m_i/n_i x^{n-i}$$
Clearing out the denominator, we have
$$ny^m=\sum_i m_i'x^{n-i}\;.$$
Further we consider a general  point $p$ on $\AF^1_{\ZZ}\subset \AF^1_{\QQ}$. Putting $x=p$ for a general prime in the above equation we have
$$ny^m=\sum_i m_i'p^{n-i}$$
and we have an extension of $\ZZ$ given by
$$\ZZ[y]/ny^m-\sum_i m_i'p^{n-i}\;.$$
The integral closure of the above is the ring of integers of
$$K=\QQ[\sqrt[m]{f(t)}]\;.$$
This $p$ is chosen such that the above ring is a Dedekind domain and it's spectrum inherits a smooth scheme structure.

Now we consider some element in the Chow group of codimension one cycles on $C$ of degree zero (denoted by $A^1(C)$), that is isomorphic to the Jacobian variety or the Picard Variety of $C$. Let $\alpha$ be the element which is a non-trivial $n$-torsion on $A^1(C)$ which is naturally identified with  $J(C)$, the Jacobian of $C$.

Next we consider a fixed spread of the cycle $\alpha$ over $\PR^1_{\ZZ}$ denoted by $\tilde{\alpha}$ and consider the fiber of the spread at some general scheme theoretic point $P$, that is $\tilde{\alpha}_P$. This is a torsion element in the Chow group of the smooth arithmetic variety $\bcC_P$, which contains $\bcO_K$ as a Zariski open set. Restricting $\tilde{\alpha}_P$ to $\bcO_K$ gives rise to an element in the class group of $\bcO_K$.
Let $U\subset \Spec{\ZZ}$ be the set of all primes at which the fibers are  smooth.
Now consider the following subset
$$B=\{P\in U:\tilde{\alpha}_P=0\}\subset U\;,$$
and prove that:

\begin{theorem} The set
$$\bcZ_d:=\{(z,b)\in C^1_{d,d}(\bcC_U/U)|Supp(z)\subset {\bcC_b}, [z]=0\in \CH^1({\bcC_{b}})\}$$ is a countable union of Zariski closed subsets in the Chow variety $C^1_{d,d}(\bcC_U/U)$ parametrizing the pairs of degree $d$ subvarieties of the arithmetic variety $\bcC_U$.
\end{theorem}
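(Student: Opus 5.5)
This is the standard argument for rational equivalence in families, transported to the relative arithmetic setting. The idea is to express the condition $[z]=0$ as the existence of a rational function whose divisor is $z$. Then $\bcZ_d$ becomes the image of countably many incidence correspondences under proper morphisms of Chow varieties. I will use the relative Chow variety $C^1_{d,d}(\bcC_U/U)$, which parametrizes pairs of effective relative cycles of degree $d$ on the fibers of $\bcC_U\to U$. It is a projective $U$-scheme, being a countable (here finite, for fixed $d$) union of projective pieces, built from the relative Hilbert scheme $\Hilb(\bcC_U/U)$ and the Chow-form construction.

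\textbf{Reformulation.} A pair $z=(z^+,z^-)$ supported in the fiber $\bcC_b$ satisfies $[z^+-z^-]=0$ in $\CH^1(\bcC_b)$ if and only if there is a rational function $\phi$ on $\bcC_b$ with $\pdiv(\phi)=z^+-z^-$, possibly after passing to a residue field extension. Since $\bcC_b$ is a smooth projective curve over the residue field at $b$, this means $z^+$ and $z^-$ are fibers of a single morphism $\bcC_b\to\PR^1$. Equivalently, there is an effective cycle $\Gamma$ on $\bcC_b\times\PR^1$ with
\[
\Gamma\cdot(\bcC_b\times\{0\})=z^+ \quad\text{and}\quad \Gamma\cdot(\bcC_b\times\{\infty\})=z^-.
\]
If we allow extra ``vertical'' components, the condition to impose is instead
\[
z^+ + w = \Gamma_0 \quad\text{and}\quad z^- + w = \Gamma_\infty
\]
for some effective $w$. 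This is the usual Roitman/Voisin trick; it makes the condition closed once the degrees of $\Gamma$ and $w$ are bounded.

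\textbf{Incidence correspondences.} For each pair of integers $(e,k)$, consider the relative Chow variety $C^1_{e}((\bcC_U\times\PR^1)/U)$ of relative curves of bidegree bounded by $e$, and the relative Chow variety $C^0_k(\bcC_U/U)$. Inside their product with $C^1_{d,d}(\bcC_U/U)$ define
\[
V_{e,k}=\{(\Gamma,w,z): \Gamma_0=z^++w,\ \Gamma_\infty=z^-+w\}.
\]
The specialization maps $\Gamma\mapsto\Gamma_0,\Gamma_\infty$ are morphisms of Chow varieties, by flatness of the fiber maps over $\PR^1$ on an open locus and by Chow-form continuity in general. Equality of cycles is a closed condition, so $V_{e,k}$ is Zariski closed. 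The projection to $C^1_{d,d}(\bcC_U/U)$ is proper because the Chow varieties are projective over $U$. Its image $\bcZ_{d,e,k}$ is therefore closed, and
\[
\bcZ_d=\bigcup_{e,k}\bcZ_{d,e,k},
\]
a countable union.

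\textbf{Main obstacle.} The hard step is the representability and properness of the relative Chow varieties over the arithmetic base $U\subset\Spec\ZZ$, together with the fact that the cycle-specialization maps are morphisms there, rather than just over a field. Over a field these facts are classical (Kollár). Over a Dedekind base I would invoke Kollár's construction of $\mathrm{Chow}(X/S)$ for $X\to S$ projective with $S$ normal (here regular of dimension one), or Rydh's relative cycle theory. Both give projective $U$-schemes, at least up to seminormalization, which does not affect the underlying Zariski topology.

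A second subtlety is the residue-field extension in the reformulation. I would handle it by working with geometric points of $C^1_{d,d}$ and noting that Zariski closedness is insensitive to passing to $\bar k(b)$. The remaining task is to check that on each fiber the trick with $w$ exactly recovers rational equivalence to zero. This follows fiberwise from the classical argument, since $\bcC_b$ is a smooth curve for $b\in U$.
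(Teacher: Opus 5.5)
Your reformulation of $[z]=0$ through an effective $\Gamma\subset\bcC_b\times\PR^1$ with prescribed fibres over $0$ and $\infty$ is fine. The gap is in the step meant to produce closed sets. The maps $\Gamma\mapsto\Gamma_0$ and $\Gamma\mapsto\Gamma_\infty$ are defined only on the open part of the relative Chow variety of $\bcC_U\times\PR^1$ where $\Gamma$ has no component contained in $\bcC_b\times\{0\}$ or $\bcC_b\times\{\infty\}$. They have no continuous extension across the complement. For instance, take a nonconstant rational function $\phi$ on a fibre and set $\psi=\phi+1$, which has the same polar divisor $D$. As $t\to 0$, the closures of the graphs of $t\phi$ and of $t\psi$ both converge to the same cycle $\bcC_b\times\{0\}+D\times\PR^1$. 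Meanwhile their fibres over $0$ stay constantly $(\phi)_0$ and $(\psi)_0\neq(\phi)_0$.

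So there is no ``Chow-form continuity'' here. $V_{e,k}$ is only locally closed, its projection to $C^1_{d,d}(\bcC_U/U)$ is not proper, and Chevalley only gives that $\bcZ_{d,e,k}$ is constructible. Bounding the degrees of $\Gamma$ and $w$ makes the parameter spaces of finite type; it does not make the condition closed. In the Roitman--Voisin argument, the padding cycle ($w$ for you; $\gamma$, $C$, $D$ in the paper) only lets you express rational equivalence through a single family of effective cycles. Closedness comes from a separate degeneration argument.

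That degeneration argument is the missing idea: you must show that a specialization of a rationally trivial pair is again rationally trivial, i.e.\ $\overline{\bcZ_{d,e,k}}\subset\bcZ_d$. This is exactly what the second half of the paper's proof does. It takes a boundary point of $W^{0,v}_d$, chooses a curve through it, lifts the curve to $U_{v,d}(\bcC)$, and normalizes and extends the evaluation map. In this relative arithmetic setting, a boundary point can lie over a different $b$ than nearby points: a pair on the generic fibre $C$ can specialize to a pair on a fibre over some $\mathbb{F}_p$. So an argument inside one fixed $\bcC_b$ is not enough.

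In your framework you could supply the step in either of two ways:
\begin{itemize}
\item Use Fulton's specialization homomorphism on Chow groups over a trait joining a point of $\bcZ_{d,e,k}$ to a point of its closure.
\item More simply, since the fibres are curves, use the relative Abel--Jacobi morphism $C^1_{d,d}(\bcC_U/U)\to\Pic_{\bcC_U/U}$, $(z^+,z^-)\mapsto\bcO(z^+-z^-)$. Then $\bcZ_d$ is the preimage of the zero section, which is closed because $\Pic_{\bcC_U/U}$ is separated (the fibres are geometrically integral). The injectivity of $\Pic(\bcC_b)\to\Pic_{\bcC_U/U}(k(b))$ then lets you pass from geometric points back to $\CH^1(\bcC_b)$.
\end{itemize}
The second route even shows $\bcZ_d$ is closed. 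This is consistent with the fact that a single $(e,k)$ already exhausts $\bcZ_d$, since $z^+-z^-=\pdiv(\phi)$ forces $\deg\phi\le d$.
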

\begin{proof}
This theorem in the case of a surface fibered over $\PR^2$  was proved in \cite{BH}, but we reconstruct the proof for the curve case for the convenience and completeness.
 There are some crucial points to be noted here:
\\
(I) The notion of Hilbert scheme and the Hom-scheme make sense for an arithmetic variety. This is as explained in \cite[Chapter: Hilbert schemes and Quot schemes, \S 5]{FGA}.
\\
(II) The family of Weil divisors of a smooth fibration over $\Spec(\ZZ)$ are parametrized by a Chow variety which is actually given by the Picard scheme parametrizing relative Cartier divisors of the same family \cite[Corollary 11.8]{Ry}. In our case the family is $\bcC_U$ which is of finite presentation over $\ZZ$ and it is a standard smooth algebra\footnote{in the sense, stack exchange \cite{St}, Definitions 10.136.6 and 29.32.1.} over $\ZZ$. This enables us to formulate the definition of rational equivalence for arithmetic varieties as in \cite[\S 3.3]{GS} in the following way:

Two Weil divisors $D_1,D_2$ are rationally equivalent on a fiber $\bcC_b$, if there exists a morphism $$f: \PR^1_{U}\to C^1_{d,d}(\bcC_U/\PR^1_{U})$$ such that
$(f\circ 0)|_b=D_1+B$ and $ (f\circ \infty)|_b=D_2+B$,
where $B$ is a positive Weil divisor and $0,\infty$ are two fixed sections from $U$ to  $\PR^1_{U}$.

Let us assume that the divisor $D_b=D_b^+-D_b^-$ is rationally equivalent to zero. This means that there exists a map $$f:\PR^1\to C^1_{d,d}({\bcC_b})$$ such that
$$f(0)=D_b^{+}+\gamma\text{ and }f(\infty)=D_b^{-}+\gamma,$$
where $\gamma$ is a positive divisor on ${\bcC_b}$.
In other words, we have the following map:
$$\ev:Hom^v(\PR^1_{U},C^1_{d}(\bcC_U/U))\to C^1_{d}(\bcC_U/U)\times C^1_{d}(\bcC_U/U) $$
 given by $f\mapsto (f(0),f(\infty))$ and that the fiber of  $f$ at $b$ is contained in $C^1_{d,d}(\bcC_b)$.

Let us denote $C^1_{d}(\bcC_U/U)$ by $C^1_d(\bcC)$ for simplicity.
We now consider the subscheme $U_{v,d}(\bcC)$ of $\PR^1_{U}\times \Hom^v(\PR^1_{U},C^1_{d}(\bcC))$ consisting of the pairs $(b,f)$ such that image of $f$ is contained in $C^1_{d}(\bcC_b)$ (such a universal family exists, for example see \cite[Theorem 1.4]{Ko} or \cite{FGA}, [Chapter on Hilbert schemes and Quot schemes]. This gives a morphism from $U_{v,d}(\bcC)$ to
$$\PR^1_{U}\times C^1_{d,d}(\bcC_b)$$
 defined by $$(b,f)\mapsto (b,f(0),f(\infty)).$$
Again, we consider the closed subscheme $\bcV_{d,d}$ of $\PR^1_{U}\times C^1_{d,d}(\bcC)$ given by $(b,z_1,z_2)$, where $(z_1,z_2)\in C^1_{d,d}(\bcC_b)$. Suppose that the map from $\bcV_{d,u,d,u}$ to $\bcV_{d+u,u,d+u,u}$ is given by
$$(A,C,B,D)\mapsto (A+C,C,B+D,D).$$
Let us denote the fiber product by $\bcV$ of $U_{v,d}(\bcC)$ and $\bcV_{d,u,d,u}$ over $\bcV_{d+u,u,d+u,u}$. If we consider the projection from $\bcV$ to $\PR^1_{U}\times C^1_{d,d}(\bcC)$, then we observe that $A$ and $B$ are supported as well as rationally equivalent on $\bcC_b$. Conversely, if $A$ and $B$ are supported as well as rationally equivalent on $\bcC_b$, then we get the map $$f:\PR^1_{U}\to C^1_{d+u,u,d+u,u}(\bcC)$$ of some degree $v$ satisfying
$$f(0)=(A+C,C)\text{ and } f(\infty)=(B+D,D),$$
where $C$ and $D$ are supported on $\bcC_b$. This implies that the image of the projection from $\bcV$ to $\PR^1_{U}\times C^1_{d,d}(\bcC)$ is a quasi-projective subscheme $W_{d}^{u,v}$ consisting of the tuples $(b,A,B)$ such that $A$ and $B$ are supported on $\bcC_b$, and that there exists a map $$f:\PR^1_{U}\to C^1_{d+u,u}(\bcC_b)$$ such that
$$f(0)=(A+C,C)$$
 and
 $$f(\infty)=(B+D,D)\;.$$
  Here $f$ is of degree $v$, and $C,D$ are supported on $\bcC_b$ and they are of co-dimension $1$ and degree $u$ cycles. This shows that $W_d$ is the union $\cup_{u,v} W_d^{u,v}$. We now prove that the Zariski closure of $W_d^{u,v}$ is in $W_d$ for each $u$ and $v$. For this, we prove the following:
$$W_d^{u,v}=pr_{1,2}(\wt{s}^{-1}(W^{0,v}_{d+u}\times W^{0,v}_u)),$$
where
$$\wt{s}: \PR^1_{U}\times C^1_{d,d,u,u}(\bcC)\to \PR^1_{U}\times C^1_{d+u,d+u,u,u}(\bcC)$$
defined by
$$\wt{s}(b,A,B,C,D)=(b,A+C,B+D,C,D).$$

We assume $(b,A,B,C,D)\in \PR^1_{U}\times C^1_{d,d,u,u}(\bcC)$ in such a way that $\wt{s}(b,A,B,C,D)\in W^{0,v}_{d+u}\times W^{0,v}_u$. This implies that there exists an element
$$(b,g)\in \PR^1_{U}\times\Hom^v(\PR^1_{U},C^p_{d+u}(\bcC))$$ and an element
$$(b,h)\in \Hom^v(\PR^1_{U},C^p_{u}(\bcC))$$ satisfying $$g(0)=A+C,~g(\infty)=B+D \text{ and } h(0)=C,h(\infty)=D$$ as well as the image of $g$ and $h$ are contained in $C^1_{d+u}(\bcC_b)$ and  $C^1_u(\bcC_b)$ respectively.

Also if $f=g\times h$ then $f\in \Hom^v(\PR^1_{U},C^p_{d+u,u}(\bcC))$ such that the image of $f$ is contained in $C^1_{d+u,u}(\bcC_b)$ as well as it satisfies the following:
$$f(0)=(A+C,C)\text{ and }(f(\infty))=(B+D,D).$$
This shows that $(b,A,B)\in W^d_{u,v}$.

On the other hand, if we assume that $(b,A,B)\in W^d_{u,v}$, then there exists $f\in \Hom^v(\PR^1_U,C^1_{d+u,u}(\bcC))$ such that
$$f(0)=(A+C,C)\text{ and }f(\infty)=(B+D,D),$$
and image of $f$ is contained in the Chow scheme of  $\bar{\bcC_b}$.

We now compose $f$ with the projections to $C^1_{d+u}(\bcC_b)$ and to $C^1_{u}(\bcC_b)$ to get a map $g\in \Hom^v(\PR^1_{U},C^1_{d+u}(\bcC))$ and a map $h\in\Hom^v(\PR^1_{U},C^1_{u}(\bcC))$ satisfying
$$g(0)=A+C,\quad g(\infty)=B+D$$
and
$$h(0)=C,\quad h(\infty)=D.$$
Also, the image of $g$ and $h$ are contained in the respective Chow varities of the fibers $\bcC_b$. Therefore, we have
$$W_d=pr_{1,2}(\wt{s}^{-1}(W_{d+u}\times W_u)).$$

We are now in a position to prove that the closure of $W_d^{0,v} $ is contained in $W_d$. Let $(b,A,B)$ be a closed point in the closure of ${W_d^{0,v}}$. Let $W$ be an irreducible component of ${W_d^{0,v}}$ whose closure contains $(b,A,B)$. We assume that $U'$ is an affine neighborhood of $(b,A,B)$ such that $U'\cap W$ is non-empty. Then there is an irreducible curve $C'$ in $U'$ passing through $(b,A,B)$. Let  $\bar{C'}$ be the Zariski closure of $C'$ in $\overline{W}$. The map
$$e:U_{v,d}(\bcC)\subset \PR^1_{U}\times \Hom^v(\PR^1_{U},C^1_{d}(\bcC))\to C^1_{d,d}(\bcC)$$
given by
$$(b,f)\mapsto (b,f(0),f(\infty))$$
is regular and $W_d^{0,v}$ is its image. We now choose a curve $T$ in $U_{v,d}(\bcC)$ such that the closure of $e(T)$ is $\bar C'$.  Let $\wt{T}$  denote the normalization of the Zariski closure of $T$, and $\wt{T_0}$ be the pre-image of $T$ in this normalization. Then the regular morphism $\wt{T_0}\to T\to \bar C'$ extends to a regular morphism, when scalar-extended to the field of algebraic numbers. Let this morphism be $\wt{T}_{\QQ}$ to $\bar C'_{\QQ}$. If $(b_{\QQ},f_{\QQ})$ is a pre-image of $(b_{\QQ},A_{\QQ},B_{\QQ})$, then
$$f_{\QQ}(0)=A_{\QQ}, \quad f_{\QQ}(\infty)=B_{\QQ}$$ and the image of $f_{\QQ}$ is contained in $C^p_{d}(C)$. 
Spreading out $f_{\QQ}$, we have an $f$ such that $$f(0)=A, \quad f(\infty)=B\;.$$
This is because there is a one to one correspondence between $\Spec(\bar\ZZ)$-points of arithmetic varieties and $\bar Q$-points of the corresponding variety over $\bar Q$. Therefore, $A$ and $B$ are  rationally equivalent. This completes the proof.
\end{proof}

\subsection{$l^i$ torsion points on the class groups for some prime $l$ and some positive integer $i$}
In this section, we prove Theorem \ref{thm1}.

\begin{proof}[Proof of Theorem \ref{thm1}]
Let us consider the fibration $\bcC\to \PR^1_{\ZZ}$. Restricting this fibration to some Zariski open set $U$ in $\PR^1_{\ZZ}$, we have an \'etale morphism from $f_U:\bcC_U\to U$. Now consider the higher direct image sheaf $R^1f_{U*}\ZZ/l^n\ZZ$, where $\ZZ/l^n\ZZ$ denotes the constant sheaf on $U$ and $f_{U*}$ denote the pushforward at the level of sheaves. Then it is well known that this sheaf
$$R^1f_{U*}\ZZ/l^n\ZZ$$
is a locally constant, constructible sheaf. Also its fiber over the geometric  point $\bar\eta$ of $U$ is isomorphic to the \'etale cohomology
$$H^1(\bcC_{\bar\eta},\ZZ/l^n\ZZ)$$
and that, this cohomology group is a $\pi_1(U,\bar\eta)$ module, where $\pi_1$ stands for the \'etale fundamental group of $U$. Therefore taking the inverse limit with varying $n$, we have an action of $\pi_1(U,\bar\eta)$ on $H^1(\bcC_{\bar\eta},\ZZ_l)$, where $\ZZ_l$ denote the group of $l$-adic integers. Taking the tensor product with $\QQ_l$, we have $H^1(\bcC_{\bar\eta},\QQ_l)$ is a $\pi_1(U,\bar\eta)$ module, which is dual to the Tate module $T_l(\bcC_{\bar\eta})$.

Now suppose  $T_l(\bcC_{\bar\eta})$ is a non-zero $\pi_1$ module.  Therefore for  $\bar\eta$, there exists some $l^i$ torsion on $J(\bcC_{\bar\eta})$ (after having a finite extension of $\QQ$). Since $T_l(\bcC_{\bar\eta})$ is a non-zero $\pi_1$-module it corresponds to a locally constant sheaf in some Zariski open neighborhood $U'$ of $\bar\eta$. Considering any other geometric point $\bar\eta'$ in $U'$, the Tate module $T_l(\bcC_{\bar\eta'})$ is also of the same rank as $T_l(\bcC_{\bar\eta})$ as a $\QQ_l$ module.

Now $\CH^1(\bcC_{\bar\eta})$ is the colimit of $\CH^1(V)$, for $V$ Zariski open in $\bcC_{U'}$. Therefore there is a Zariski open set $V$ in $\bcC_{\bar\eta}$, such that there exists $l^i$-torsion in $\CH^1(V)$. Now given a $P$ in $V$, over which we have $\bar\eta'$, the $l^i$ torsion restricted to $\CH^1(V_P)$ is non-zero, because if it is zero then its pullback to $\CH^1(\bcC_{\bar\eta'})$ is zero, which is the non-zero element we started with. Such a $V_P$ is  $\Spec(\bcO_L)$, where $L$ is a finite extension of $K$  mentioned above. Therefore there exists $l^i$ torsions in $\CH^1(\Spec(\bcO_L))$, which is the class group of $\bcO_L$ (this follows from localization exact sequence). Considering the presence of \'etale monodromy it follows that infinitely many number fields of a fixed degree has an element of order $l^i$.
\end{proof}

\section{An example} Let us consider the super-elliptic curve given by $y^5=x^5-31$. Since the Tate module of this curve is non-trivial, by Theorem \ref{thm1}, there exist infinitely many number fields which are finite extensions of  $K=\QQ(\zeta_5)$, the $5$-th degree cyclotomic field,  such that the class group of the finite extension  of $K$ (which is $L$) is divisible by $l^i$ for some $i$ and some prime $l$. In particular, if we take $l=5$ and the degree of the extension of $[L:K]$ is not divisible by $5$ then the class group of the finite extension surjects onto the class group of  $K=\QQ(\zeta_5)$  and we get $5$-divisibility of the class group of $\QQ(\zeta_5)$. 

In fact for any prime $p$, there exists a finite extension $L$ of $\QQ(\zeta_p)$ such that the class number of $L$ is divisible by $p$. Such an extension can be obtained from the super-elliptic curve 
$y^p=x^p-(2^p-1)$ (specializing at the prime $2$). 

If $p$ does not divide  $[L:\QQ(\zeta_p)]$, then $p$ divides the class number of $\QQ(\zeta_p)$. Hence by the result of Herbrandt-Ribet \cite{Ri} $p$ must divide at least one of the Bernoulli numbers $B_k$, where $2\leq k \leq p-3$ and $k$ even.

\noindent\textbf{Acknowledgements.}
This work is partially supported by SERB MATRICS grant (No. MTR/2021/000762) and SERB  CRG grant (No. CRG/2023/007323), Govt. of India.

\end{document}